\documentclass[a4paper,11pt,reqno]{article}  
\usepackage{amsmath,amsthm,amssymb,amscd,mathdots,stmaryrd,enumerate,shuffle,ytableau,varwidth,faktor,xfrac,xcolor,calc,titlepic,setspace,enumitem}
\usepackage[utf8]{inputenc}
\usepackage[nottoc]{tocbibind}
\usepackage{pst-node}
\usepackage{auto-pst-pdf}
\usepackage{tikz-cd} 
\usepackage{tikz}
\usetikzlibrary{arrows,positioning}
\usetikzlibrary{shadows}
\usetikzlibrary{calc}
\usetikzlibrary{shapes,snakes}
\usetikzlibrary{decorations}
\usetikzlibrary{decorations.pathreplacing}
\usepackage{hyperref}

%\usepackage{titlesec}
%\allsectionsfont{\sffamily}
\allowdisplaybreaks

\newlength{\bibitemsep}\setlength{\bibitemsep}{.2\baselineskip plus .05\baselineskip minus .05\baselineskip}
\newlength{\bibparskip}\setlength{\bibparskip}{-1pt}
\let\oldthebibliography\thebibliography
\renewcommand\thebibliography[1]{%
  \oldthebibliography{#1}%
  \setlength{\parskip}{\bibitemsep}%
  \setlength{\itemsep}{\bibparskip}%
}

%%%%%%%%%%%%%%
%\setlength{\topmargin}{-60pt}

\setlength{\topmargin}{-80pt}
\setlength{\headheight}{12truept}
\setlength{\headsep}{25pt}
\setlength{\footskip}{30pt}
\setlength{\hoffset}{10mm}
\setlength{\voffset}{39pt}
\setlength{\oddsidemargin}{-15mm}
\setlength{\evensidemargin}{-15mm}
\setlength{\textheight}{227mm}
\setlength{\textwidth}{170mm}
%%%%%%%%%%%%%%%%%%
\raggedbottom

%%%%%%%%%%%%%%%%%%%%
\newtheorem{theorem}{Theorem}
\newtheorem{proposition}[theorem]{Proposition}

\newtheorem{corollary}[theorem]{Corollary}

\numberwithin{equation}{section}
\theoremstyle{definition}
\newtheorem{definition}[theorem]{Definition}
\newtheorem{example}[theorem]{Example}
\newtheorem{remark}[theorem]{Remark}

%%%%%%%%%%%%%%%%%%%%%%%%

%\parindent 0.0em

%%%%%%%

%\newcommand{\F}{\mathscr{H}}

\newcommand{\R}{\mathbb{R}}
\newcommand{\N}{\mathbb{N}}

\newcommand{\wt}{\operatorname{wt}}

\newcommand{\bone}{{\mathbf{1}}}

% Gen series commands 

\newcommand{\bz}{\zeta_q^\mathrm{BZ}} %  Bradley--Zhao

 %  small g 
\newcommand{\sz}{\zeta_q^\mathrm{SZ}} %  Schlesinger--Zudilin

\newenvironment{itemize*}%
  {\begin{itemize}[topsep=4pt]%
    \setlength{\itemsep}{0pt}%
    \setlength{\parskip}{0pt}}%
  {\end{itemize}}
  
\newenvironment{enumerate*}%
  {\begin{enumerate}[topsep=4pt,wide,labelwidth=!,labelindent=0pt]%
    \setlength{\itemsep}{3pt}%
    \setlength{\parskip}{0pt}}%
  {\end{enumerate}}

\definecolor{darkblue}{RGB}{0,0,130}
\definecolor{mygray}{RGB}{160,160,160}
\definecolor{mycyan}{RGB}{150,255,255}
\definecolor{darkred}{RGB}{130,0,0}

\tikzset{
    %Define standard arrow tip
    >=stealth',
    %Define style for boxes
    punkt/.style={
           rectangle,
           rounded corners,
           draw=black, very thick,
           text width=6.5em,
           minimum height=2em,
           text centered},
    % Define arrow style
    pil/.style={
           ->,
           thick,
           shorten <=2pt,
           shorten >=2pt,}
}

% Command to draw (r,c)-colored partitions

%Command for (r,c)-/(c,r)-colored partitions with crossed out columns

\makeatletter
\newcommand{\gettikzxy}[3]{%
  \tikz@scan@one@point\pgfutil@firstofone#1\relax
  \edef#2{\the\pgf@x}%
  \edef#3{\the\pgf@y}%
}
\makeatother

\title{Proving dualities for $q$MZVs with connected sums}
\author{Benjamin Brindle \thanks{The author has received funding from the European Research Council (ERC) under the European Union’s Horizon 2020 research and innovation programme (grant agreement No. 101001179).}}

\date{\today}
\AtEndDocument{\bigskip{\footnotesize%
  \textsc{University of Cologne, Department of Mathematics and Computer Sciences} \par  
  Weyertal 86-90, 50931 Cologne, Germany
  \par
  \textit{E-mail address:} \texttt{bbrindle@uni-koeln.de} \par
  \addvspace{\medskipamount}
}}
\begin{document}
\maketitle
\textsc{Abstract.} This paper gives a new application of so-called connected sums, introduced recently by Seki and Yamamoto \cite{SY}. Special about our approach is that it proves a duality for the Schlesinger– Zudilin and the Bradley–Zhao model of qMZVs simultaneously. The latter implies the duality for MZVs and the former can be used to prove the shuffle product formula for MZVs. Furthermore, the $q$-Ohno relations, a generalization of Bradley--Zhao duality, is also obtained.
\vspace{0.2cm}\\ 
2010 \textit{Mathematics Subject Classification.} 11M32, 05A30.
\vspace{0.2cm}\\ 
\textbf{Key words:} Multiple zeta values; $q$-multiple zeta values; duality; connected sums.
%\tableofcontents
\section{Notation and definitions}
For an \emph{admissible index} $\mathbf{k}=(k_1,\dots,k_r)\in\N^r$, i.e., $r\geq 0$ and $k_1\geq 2$, its \emph{multiple zeta value} (MZV) is
\begin{align*}
    \zeta(\mathbf{k}) := \sum\limits_{m_1>\dots>m_r>0} \frac{1}{m_1^{k_1}}\cdots \frac{1}{m_r^{k_r}}.
\end{align*}
To understand the algebraic structure of MZVs better on the one hand and to get connections to holomorphic functions, in particular, modular forms (see \cite{GKZ}, \cite{Bac}), on the other hand, it is useful to introduce $q$-analogs of MZVs. There are several models of $q$-analogs. We focus in this paper on the one by Schlesinger, and Zudilin and Bradley and Zhao. For further details on these and other models, we refer to \cite{Zha}, \cite{Bri}. In this note $q$ will be a formal variable or a real number with $0<q<1$.

The Bradley--Zhao model is defined as follows: Set $\bz(\emptyset):= 1$ and for $\mathbf{k} = (k_1,\dots,k_r)$ an admissible index define
\begin{align*}
    \bz(\mathbf{k}) := \sum\limits_{m_1>\dots>m_r>0} \frac{q^{m_1 (k_1 - 1)}}{(1-q^{m_1})^{k_1}}\cdots \frac{q^{m_r (k_r - 1)}}{(1-q^{m_r})^{k_r}}.
\end{align*}

Similar, but different, we define Schlesinger--Zudilin $q$MZVs via $\sz(\emptyset) := 1$ and for every \emph{SZ-admissible index} $\mathbf{k}$, i.e., $\mathbf{k}\in\N_0^r$ for some $r\geq 0$ with $k_1\geq 1$, we set
\begin{align*}
    \sz(\mathbf{k}) := \sum\limits_{m_1>\dots>m_r>0} \frac{q^{m_1 k_1}}{(1-q^{m_1})^{k_1}}\cdots \frac{q^{m_r k_r}}{(1-q^{m_r})^{k_r}}.
\end{align*}

\section{Dualities}
Write an admissible index $\mathbf{k}$ in the shape $\mathbf{k} = \left(k_1+1,\{1\}^{d_1-1},\dots,k_r+1,\{1\}^{d_r-1}\right)$ with $k_j,\, d_j\geq 1$ unique ($\{1\}^d$ means that $1$ is repeated $d$-times). For the next two theorems, we need the \emph{dual index},
\begin{align*}
    \mathbf{k}^\vee := \left(d_r+1,\{1\}^{k_r-1},\dots,d_1+1,\{1\}^{k_1-1}\right).
\end{align*}

\begin{theorem}[MZV-Duality, {\cite[§9]{Zag}}]
\label{MZV-dual}
For every admissible index $\mathbf{k}$, we have $\zeta\left(\mathbf{k}\right) = \zeta\left(\mathbf{k}^\vee\right)$.
\end{theorem}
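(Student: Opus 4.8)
\section*{Proof proposal}

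The plan is to give a self-contained proof in the spirit of the paper, namely via the connected-sum method of Seki--Yamamoto \cite{SY}, rather than the classical route (writing $\zeta(\mathbf{k})$ as an iterated integral over a simplex and applying the substitution $t_i\mapsto 1-t_{\mathrm{wt}(\mathbf{k})+1-i}$, which reverses the simplex and swaps the two differential forms $\tfrac{dt}{t}\leftrightarrow\tfrac{dt}{1-t}$, producing exactly $\mathbf{k}^\vee$). The connected-sum proof is the one that will $q$-deform in the later sections, so it is the natural framework here. For two indices $\mathbf{a}=(a_1,\dots,a_r)$ and $\mathbf{b}=(b_1,\dots,b_s)$ I introduce the \emph{connected sum}
\[
Z(\mathbf{a};\mathbf{b}) := \sum_{\substack{m_1>\dots>m_r>0\\ n_1>\dots>n_s>0}} \left(\prod_{i=1}^{r}\frac{1}{m_i^{a_i}}\right)\frac{m_1!\,n_1!}{(m_1+n_1)!}\left(\prod_{j=1}^{s}\frac{1}{n_j^{b_j}}\right),
\]
where the \emph{connector} $C(m,n)=\tfrac{m!\,n!}{(m+n)!}$ links the two largest summation variables. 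Reading $C(m,0)=1$, the boundary values are $Z(\mathbf{k};\emptyset)=\zeta(\mathbf{k})$ and $Z(\emptyset;\mathbf{l})=\zeta(\mathbf{l})$, which is where admissibility (hence convergence) enters.

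The engine of the proof is a pair of \emph{transport relations} obtained by telescoping with $f(m)=\tfrac{(m-1)!\,(n-1)!}{(m+n-1)!}$, for which one checks $f(m)-f(m+1)=\tfrac{(m-1)!\,n!}{(m+n)!}=\tfrac{1}{m}C(m,n)$. Summing gives
\[
\sum_{m>m'}\frac{1}{m}\,C(m,n)=\frac{1}{n}\,C(m',n),
\qquad
\frac{1}{m}\,C(m,n)=\sum_{n_0>n_1}\frac{1}{n_0}\,C(m,n_0),
\]
the second being the first read backwards via the symmetry $C(m,n)=C(n,m)$. In terms of $Z$, the first relation removes a left-hand variable whose exponent is $1$ while raising the top right-hand exponent by $1$; the second lowers the top left-hand exponent by $1$ while prepending a new part $1$ to $\mathbf{b}$. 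Since $\mathbf{k}$ is admissible, the top-left exponent $k_1\geq 2$, so the process always begins with the second relation and the right-hand index is non-empty by the time the first relation is ever used — this is precisely what prevents the divergent sum $\sum_m \tfrac1m C(m,0)$ and is the one place admissibility is essential.

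I would then run the greedy peeling algorithm: while the top-left exponent is $\geq 2$ apply the second relation, and when it equals $1$ apply the first, repeating until the left index is empty, at which point $Z(\emptyset;\mathbf{c})=\zeta(\mathbf{c})$. Tracking a single block $(k_j+1,\{1\}^{d_j-1})$ shows it is consumed and re-emitted, in reversed order, as $(d_j+1,\{1\}^{k_j-1})$, so that $\mathbf{c}=\mathbf{k}^\vee$ and $\zeta(\mathbf{k})=Z(\mathbf{k};\emptyset)=\dots=Z(\emptyset;\mathbf{k}^\vee)=\zeta(\mathbf{k}^\vee)$. The main obstacle is exactly this last combinatorial bookkeeping: verifying by induction on the weight that the sequence of moves reproduces the block description of $\mathbf{k}^\vee$ given before the theorem, together with the (routine) justification that all interchanges of summation are licensed by absolute convergence of the connected sums.
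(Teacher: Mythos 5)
Your argument is correct, but it is not the paper's proof. The paper deduces MZV-duality in two lines from BZ-duality (Theorem \ref{BZ-dual}), using $\zeta(\mathbf{k})=\lim_{q\to 1}(1-q)^{\wt(\mathbf{k})}\bz(\mathbf{k})$ together with $\wt(\mathbf{k})=\wt(\mathbf{k}^\vee)$; the connected-sum work has already been done once and for all at the $q$-level. What you propose instead is the original Seki--Yamamoto argument carried out directly for MZVs, with the classical connector $C(m,n)=\tfrac{m!\,n!}{(m+n)!}$; your telescoping identity $f(m)-f(m+1)=\tfrac1m C(m,n)$, the two transport relations, and the block-by-block peeling of $(k_j+1,\{1\}^{d_j-1})$ into $(d_j+1,\{1\}^{k_j-1})$ all check out, and your observation that admissibility forces the first move to be the ``lower the top-left exponent'' relation (so the right index is non-empty before the divergent case $\sum_m\tfrac1m C(m,0)$ could ever arise) is exactly the right place to use $k_1\geq 2$; positivity of all terms makes the rearrangements harmless. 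The trade-off: your route is self-contained and needs no $q$-analogues at all, but it rebuilds from scratch a transport-relation machine that is precisely the $x=1$, $q\to 1$ degeneration of the paper's connector $\tfrac{q^{m_1n_1}f_q(m_1;x)f_q(n_1;x)}{f_q(m_1+n_1;x)}$, and it forgoes the paper's main selling point, namely that one $q$-connected sum delivers SZ-, BZ-, and MZV-duality (plus the $q$-Ohno relations) simultaneously, with MZV-duality falling out as a corollary rather than requiring its own induction.
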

The next theorem can be seen as a $q$-analogue of MZV-duality since MZV-duality follows immediately from it (cf. the proof):
\begin{theorem}[BZ-Duality, {\cite[Thm. 5]{Bra}}]
\label{BZ-dual}
For every admissible index $\mathbf{k}$, we have $\bz\left(\mathbf{k}\right) = \bz\left(\mathbf{k}^\vee\right)$.
\end{theorem}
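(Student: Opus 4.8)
The plan is to adapt the connected-sum method of Seki--Yamamoto to the Bradley--Zhao model. For two indices $\mathbf{k}=(k_1,\dots,k_r)$ and $\mathbf{l}=(l_1,\dots,l_s)$ I would introduce a \emph{connected sum} that glues the two largest summation variables together by a $q$-binomial connector,
\begin{align*}
Z(\mathbf{k};\mathbf{l}) := \sum_{\substack{m_1>\dots>m_r>0\\ n_1>\dots>n_s>0}} C(m_1,n_1)\left(\prod_{i=1}^r \frac{q^{m_i(k_i-1)}}{(1-q^{m_i})^{k_i}}\right)\left(\prod_{j=1}^s \frac{q^{n_j(l_j-1)}}{(1-q^{n_j})^{l_j}}\right),
\end{align*}
where, writing $(q;q)_k:=\prod_{i=1}^k(1-q^i)$,
\begin{align*}
C(m,n) := q^{mn}\,\frac{(q;q)_m\,(q;q)_n}{(q;q)_{m+n}},\qquad C(m,0):=1.
\end{align*}
Since $C(m,0)=1$, the two boundary cases collapse to honest Bradley--Zhao values: $Z(\mathbf{k};\emptyset)=\bz(\mathbf{k})$ and $Z(\emptyset;\mathbf{l})=\bz(\mathbf{l})$. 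The whole proof then consists of transporting all of $\mathbf{k}$ onto the $\mathbf{l}$-side, turning $Z(\mathbf{k};\emptyset)$ into $Z(\emptyset;\mathbf{k}^\vee)$.

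The engine is two \emph{transport relations}, which I would derive from the single-step recursion
\begin{align*}
C(m-1,n)-C(m,n)=\frac{1-q^n}{q^n(1-q^m)}\,C(m,n)
\end{align*}
and its mirror in $m\leftrightarrow n$ (both are immediate from the definition of $C$). Rearranging the first as $\tfrac{1}{1-q^m}C(m,n)=\tfrac{q^n}{1-q^n}\bigl(C(m-1,n)-C(m,n)\bigr)$ and summing telescopically over the largest $m$-variable gives, for $n\ge 1$,
\begin{align*}
\sum_{a>b}\frac{1}{1-q^a}\,C(a,n)=\frac{q^n}{1-q^n}\,C(b,n),
\end{align*}
the boundary term vanishing because the prefactor $q^{an}$ forces $C(a,n)\to 0$ as $a\to\infty$. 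The mirror identity reads $\tfrac{q^m}{1-q^m}C(m,n)=\sum_{c>n}\tfrac{1}{1-q^c}C(m,c)$ for $m\ge 1$. Interpreting them: the left relation \emph{removes} the (index-$1$) largest $m$-variable and \emph{increments} the largest $l$-component via the factor $\tfrac{q^n}{1-q^n}$; the right relation \emph{lowers} the index of the largest $m$-component by one and \emph{creates} a new index-$1$ largest $l$-variable. In the word encoding $k_i\mapsto x^{k_i-1}y$ these are exactly the two elementary moves, and each move transfers one letter from $\mathbf{k}$ to $\mathbf{l}$ with $x\leftrightarrow y$ swapped.

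Processing the components of $\mathbf{k}$ from the front, the $x$-peels prepend new largest components to $\mathbf{l}$ while the $y$-peels increment the current largest component of $\mathbf{l}$; together these accumulate the letters of $\mathbf{k}$ in \emph{reversed} order and with $x,y$ interchanged, which is precisely the combinatorial rule defining $\mathbf{k}^\vee$. Hence after exhausting $\mathbf{k}$ one reaches $Z(\emptyset;\mathbf{k}^\vee)$, and the chain $\bz(\mathbf{k})=Z(\mathbf{k};\emptyset)=\cdots=Z(\emptyset;\mathbf{k}^\vee)=\bz(\mathbf{k}^\vee)$ gives the theorem. Convergence at every step is guaranteed by admissibility: since $k_1\ge 2$ the first move is always an $x$-peel, i.e. the mirror relation, which needs only $m\ge 1$; thereafter $\mathbf{l}$ is nonempty so every subsequent application of the left relation has $n\ge 1$.

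The main obstacle is purely the first stage: guessing the connector $C$ and verifying the two transport relations. The delicate point is the factor $q^{mn}$, which is what makes the telescoped boundary terms vanish; without it the series do not close. This same factor is also the source of the paper's simultaneous statement, since the relations convert a Bradley--Zhao index-$1$ factor $\tfrac{1}{1-q^m}$ on one side into a Schlesinger--Zudilin index-$1$ factor $\tfrac{q^n}{1-q^n}$ on the other, so reading them in the two directions should yield the BZ- and the SZ-duality from one and the same connected sum. The remaining work — tracking which variable carries the connector, the order reversal, and the equality $Z(\emptyset;\mathbf{k}^\vee)=\bz(\mathbf{k}^\vee)$ — is routine bookkeeping once the transport relations are in hand.
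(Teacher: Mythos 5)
Your proposal is correct and is essentially the paper's own argument: your connector $C(m,n)$ is exactly the paper's $q^{mn}f_q(m;x)f_q(n;x)/f_q(m+n;x)$ specialized at $x=1$, your two transport relations are the $x=1$ case of Theorem \ref{thm: transport rel}, and the letter-by-letter bookkeeping turning $Z(\mathbf{k};\emptyset)$ into $Z(\emptyset;\mathbf{k}^\vee)$ is Corollary \ref{cs-dagger} (stated there in SZ-indexing via $(\mathbf{k}-\mathbf{1})^\dagger=\mathbf{k}^\vee-\mathbf{1}$). The only substantive difference is that the paper retains the deformation parameter $x$, so that the single connected sum also yields SZ-duality at $x=0$ and the $q$-Ohno relations by Taylor expansion at $x=1$, which your specialization to the pure Bradley--Zhao setting gives up.
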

A generalization of BZ-duality are the so-called $q$-Ohno relations, of which BZ-duality is the special case $c=0$:
\begin{theorem}[$q$-Ohno relation, {\cite[Thm. 1]{OT}}]
\label{q-Ohno}
For any admissible index ${\mathbf{k}} = (k_1,\dots,k_r)$ and any $c\in\N_0$ we have
\begin{align*}
    \sum\limits_{ |{\mathbf{c}}| = c} \bz\left({\mathbf{k}} + {\mathbf{c}}\right) = \sum\limits_{ |{\mathbf{c}}| = c} \bz\left({\mathbf{k}}^\vee + {\mathbf{c}}\right),
\end{align*}
where we sum over all ${\mathbf{c}} = (c_1,\dots,c_r)\in\N_0^r$ with $|{\mathbf{c}}| := c_1+\dots + c_r = c$.
\end{theorem}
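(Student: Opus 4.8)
The plan is to reduce the statement to a duality for a one-parameter deformation of the Bradley--Zhao model and then establish that deformed duality by the connected-sum method of Seki--Yamamoto. First I would package the parameter $c$ into a formal variable $x$: multiplying the asserted identity by $x^c$ and summing over $c\in\N_0$, the two sides become $\sum_{\mathbf{c}\in\N_0^r} x^{|\mathbf{c}|}\,\bz(\mathbf{k}+\mathbf{c})$ and the analogous sum attached to $\mathbf{k}^\vee$. Carrying out the geometric summation in each coordinate, every Bradley--Zhao factor $\frac{q^{m(k-1)}}{(1-q^m)^k}$ collapses to $\frac{q^{m(k-1)}}{(1-q^m)^{k-1}\,(1-(1+x)q^m)}$. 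Hence it suffices to prove that this $x$-deformed $q$MZV is invariant under $\mathbf{k}\mapsto\mathbf{k}^\vee$ and then to compare coefficients of $x^c$; note that $x=0$ (equivalently $c=0$) returns the undeformed factor and so recovers BZ-duality (Theorem \ref{BZ-dual}) as a special case.

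Next I would introduce a connected sum $Z(\mathbf{k};\mathbf{l})$ attached to a pair of indices of lengths $r$ and $s$: a doubly nested series over $m_1>\dots>m_r>0$ and $n_1>\dots>n_s>0$, carrying the $x$-deformed factors above on the $\mathbf{k}$- and $\mathbf{l}$-sides and glued along their innermost summation variables $m_r$ and $n_s$ by a connector $C(m_r,n_s)$. The connector should be a $q$-Pochhammer analogue of the classical Seki--Yamamoto connector $\frac{m!\,n!}{(m+n)!}$, itself depending on $x$, chosen so that the boundary evaluation $Z(\mathbf{k};\emptyset)$ reproduces exactly the $x$-deformed value attached to $\mathbf{k}$, while $Z(\emptyset;\mathbf{k}^\vee)$ reproduces the one attached to $\mathbf{k}^\vee$.

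The core of the argument is a transport relation asserting that $Z$ is unchanged when a single box is moved from one index to the other in the manner prescribed by the $\vee$-operation: concretely, two relations converting between appending a $1$ to one index and incrementing an entry of the other. I would prove this by isolating the innermost summation variable on the side being modified and verifying a telescoping identity, the connector being engineered so that summing the extra deformed factor against $C$ over that variable returns $C$ with a shifted argument and all boundary terms vanishing. This single $q$-series telescoping/partial-fraction identity — together with the demand that one and the same connector handle both the undeformed case and the full $x$-deformation — is where I expect the main difficulty to lie; finding the right connector is essentially the whole game, after which the verification is a finite manipulation.

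Finally, starting from $Z(\mathbf{k};\emptyset)$ and applying the transport relation repeatedly, I would migrate every box across the connector until reaching $Z(\emptyset;\mathbf{k}^\vee)$. Since each transport preserves $Z$, the two boundary evaluations coincide, giving $\sum_{\mathbf{c}} x^{|\mathbf{c}|}\,\bz(\mathbf{k}+\mathbf{c}) = \sum_{\mathbf{c}} x^{|\mathbf{c}|}\,\bz(\mathbf{k}^\vee+\mathbf{c})$. That the prescribed sequence of transports realizes exactly the combinatorial transpose defining $\mathbf{k}\mapsto\mathbf{k}^\vee$ is the same bookkeeping as in the classical proof and is routine once the transport relation is in hand. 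Extracting the coefficient of $x^c$ then yields the $q$-Ohno relation, and admissibility of $\mathbf{k}$ (hence of $\mathbf{k}^\vee$) ensures that every series in sight is well defined, so the coefficient comparison is valid.
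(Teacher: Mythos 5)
Your proposal is correct and follows essentially the same route as the paper: the $x$-deformed factor $\frac{q^{m(k-1)}}{(1-q^m)^{k-1}(1-(1+x)q^m)}$ you obtain by geometric summation is exactly the paper's connected-sum factor $\frac{1}{1-q^m x'}\cdot\frac{q^{m(k-1)}}{(1-q^m)^{k-1}}$ after the change of variable $x'=1+x$, the connector you describe is the paper's $\frac{q^{m_1n_1}f_q(m_1;x)f_q(n_1;x)}{f_q(m_1+n_1;x)}$ with $f_q(m;x)=\prod_{h=1}^m(1-q^hx)$, and the transport/telescoping argument and final coefficient extraction (the paper expands in powers of $(x'-1)$) are identical in substance. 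The only point to fix is that the connector must be attached to the \emph{largest} summation variables (the paper's $m_1,n_1$ in the convention $m_1>\dots>m_r$), not the innermost ones, for the telescoping to produce the transport relations.
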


For the SZ-model, we write an SZ-admissible index in the shape $\mathbf{k} = \left(k_1+1,\{0\}^{d_1},\dots,k_r+1,\{0\}^{d_r}\right)$ with $k_j,\, d_j\geq 0$ unique and define the \emph{SZ-dual index},
\begin{align*}
    \mathbf{k}^\dagger := \left(d_r+1,\{0\}^{k_r},\dots,d_1+1,\{0\}^{k_1}\right).
\end{align*}

\begin{theorem}[SZ-Duality, {\cite[Thm. 8.3]{Zha}}]
\label{SZ-dual}
For $\mathbf{k}$ SZ-admissible, we have $\sz\left(\mathbf{k}\right) = \sz\left(\mathbf{k}^\dagger\right)$.
\end{theorem}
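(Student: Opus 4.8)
The plan is to prove SZ-duality by the connected-sum method of Seki and Yamamoto \cite{SY}, interpolating between the two sides of the identity through a two-parameter ``connected'' double sum. Writing $x_m := q^m/(1-q^m)$, so that $\sz(\mathbf{k}) = \sum_{m_1 > \dots > m_r > 0}\prod_i x_{m_i}^{k_i}$, I would introduce for two SZ-admissible (or more general) indices $\mathbf{k}=(k_1,\dots,k_r)$ and $\mathbf{l}=(l_1,\dots,l_s)$ the connected sum
\begin{align*}
Z(\mathbf{k};\mathbf{l}) := \sum_{\substack{m_1 > \dots > m_r > 0\\ n_1 > \dots > n_s > 0}} \Big(\prod_{i=1}^r x_{m_i}^{k_i}\Big)\Big(\prod_{j=1}^s x_{n_j}^{l_j}\Big)\, C(m_r,n_s),
\end{align*}
where $C(m,n)$ is a connector coupling the two innermost (smallest) summation variables, normalised so that $C(m,0)=C(0,n)=1$. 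With this normalisation the empty-index specialisations are exactly the two sides we want to compare, $Z(\mathbf{k};\emptyset)=\sz(\mathbf{k})$ and $Z(\emptyset;\mathbf{l})=\sz(\mathbf{l})$, so the whole proof reduces to transforming $Z(\mathbf{k};\emptyset)$ step by step into $Z(\emptyset;\mathbf{k}^\dagger)$.

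The engine is a transport relation: a summation-by-parts identity for $C$ that moves a single unit of data from one index to the other while leaving $Z$ unchanged. A natural candidate is the $q$-binomial connector $C(m,n) = \frac{(q;q)_m (q;q)_n}{(q;q)_{m+n}}$, where $(q;q)_k := \prod_{i=1}^k (1-q^i)$; indeed one checks the purely algebraic identity
\begin{align*}
x_m\, C(m,n) = \frac{1}{1-q^n}\big(C(m-1,n) - C(m,n)\big),
\end{align*}
which telescopes when summed over the coupled variable. Note that $\tfrac{1}{1-q^n} = 1 + x_n$ is precisely the Bradley--Zhao weight of an entry equal to $1$; this is the structural reason one may hope to run the SZ and BZ arguments in parallel, as announced in the abstract. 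The transport relation should come in two flavours matching the two kinds of entries of an SZ index: raising the innermost exponent by $1$ on one side (a power of $x$) is exchanged for appending a new, zero-weighted variable on the other side (an extra strict inequality, i.e.\ a $\{0\}$-entry). In other words, the $\{0\}$'s of the SZ formalism play exactly the role that the $\{1\}$'s play in the classical Seki--Yamamoto proof of Theorem \ref{MZV-dual}.

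With the transport relations in hand, I would prove by induction that, block by block, the operation sending $\mathbf{k}=(k_1+1,\{0\}^{d_1},\dots,k_r+1,\{0\}^{d_r})$ to its reverse-and-swap $\mathbf{k}^\dagger=(d_r+1,\{0\}^{k_r},\dots,d_1+1,\{0\}^{k_1})$ is realised by exactly the moves permitted by transport: each unit of weight is converted into one additional $\{0\}$ on the dual side and conversely, which is precisely the interchange $k_j \leftrightarrow d_j$. Running this from $(\mathbf{k};\emptyset)$ until the left-hand index is exhausted lands on $(\emptyset;\mathbf{k}^\dagger)$, and reading off the two boundary values yields $\sz(\mathbf{k}) = Z(\mathbf{k};\emptyset) = Z(\emptyset;\mathbf{k}^\dagger) = \sz(\mathbf{k}^\dagger)$.

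The main obstacle is the precise choice of connector together with the exact form of the transport relation. The algebraic identity above is correct, but summing it over the coupled variable produces a boundary contribution from $m\to\infty$ (for the $q$-binomial connector one computes $C(m,n)\to (q;q)_n \neq 0$), so the connector --- or the manner in which the innermost variable is summed against its neighbour's constraint --- must be arranged so that these boundary terms either vanish or recombine into the next transport step. Getting this bookkeeping right, and in particular making the creation and destruction of $\{0\}$-entries rigorous (there is no such phenomenon in the classical case), is where the real work lies. I would also justify convergence of $Z$ as a formal power series in $q$, each $q$-coefficient being a finite sum, and test the scheme against small cases such as $\sz(2)=\sz(1,0)$; as a consistency check, the $q\to 1$ degeneration of the argument should reproduce a proof of Theorem \ref{MZV-dual}.
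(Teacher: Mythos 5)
Your overall strategy---a Seki--Yamamoto connected sum with boundary values $Z(\mathbf{k};\emptyset)=\sz(\mathbf{k})$, transport relations, and a block-by-block induction realising $\mathbf{k}\mapsto\mathbf{k}^\dagger$---is exactly the paper's. But the two technical points you leave open are precisely where your version, as written, breaks, and both are fixed in the paper by one concrete choice of connector. First, you couple the \emph{innermost} (smallest) variables $m_r,n_s$. Transport works by summing the coupled variable against its neighbour's constraint, and at the bounded end this sum runs over $0<a<m_{r-1}$, so the telescoping leaves \emph{two} boundary terms, $C(0,n)-C(m_{r-1}-1,n)$; the term $C(0,n)=1$ disconnects the sum and there is no clean transport relation. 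The paper instead couples the \emph{outermost} (largest) variables $m_1,n_1$ and transports by prepending a new variable $a>m_1$ and summing it to infinity, where only one boundary term can survive. Second---the gap you flag yourself---your connector $\frac{(q;q)_m(q;q)_n}{(q;q)_{m+n}}$ tends to $(q;q)_n\neq 0$ as $m\to\infty$, and in addition your identity produces the coefficient $\frac{1}{1-q^n}=1+x_n$, which is \emph{not} the Schlesinger--Zudilin weight $\frac{q^{n}}{1-q^{n}}=x_n$ of a newly created entry $1$; it would generate star-type terms rather than $\sz(\mathbf{k}^\dagger)$. Both defects are cured simultaneously by inserting the factor $q^{mn}$ into the connector: it forces $q^{an}\to 0$ at the open end (for $n\geq 1$) and converts the transported coefficient into $\frac{q^{n}}{1-q^{n}}$.

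In fact, for SZ-duality alone the paper's connected sum at $x=0$ reduces to the bare connector $q^{m_1 n_1}$ (since $f_q(m;0)=1$), and the whole transport relation is nothing but the geometric series $\sum_{a>m}q^{an}=\frac{q^{n}}{1-q^{n}}\,q^{mn}$, which simultaneously raises the first entry of one index by $1$ and absorbs a prepended $0$-entry on the other side---exactly the exchange $k_j\leftrightarrow d_j$ you describe. Your $q$-binomial connector is the $x=1$ specialisation of the paper's $\frac{q^{m_1n_1}f_q(m_1;x)f_q(n_1;x)}{f_q(m_1+n_1;x)}$ with the essential $q^{m_1n_1}$ omitted; it is the deformation parameter $x$ (not the coefficient $1+x_n$) that lets the same computation yield BZ-duality in the limit $x\to 1$ and the $q$-Ohno relations by Taylor expansion at $x=1$. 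So the architecture of your proposal is right, but without the $q^{mn}$ factor and the outer coupling the transport step does not close, and that is the entire content of the proof.
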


Note that BZ- and SZ-duality on algebraic level look the same, both can be obtained by the same anti-automorphism on the non-commutative free algebra in two variables (see, e.g., \cite[Thm. 3.5, Thm. 3.16]{Bri}). But they imply different things. BZ-duality gives direct duality for MZVs, while SZ-duality does not. However, SZ-duality implies another important result in the theory of MZVs, namely the shuffle product formula (cf. \cite{EMS}, \cite{Sin}, for details \cite[Thm. 3.46]{Bri}).

For some calculations in the next section we need the connection between admissible and SZ-admissible index: An index $\mathbf{k}$ is admissible if and only if $\mathbf{k}-\bone$ is SZ-admissible ($\mathbf{k}+\bone$ is the index, which is $\mathbf{k}$ with every entry increased by $1$; similar for $\mathbf{k}-\bone$). Furthermore, we have for $\mathbf{k}$ admissible
\begin{align}
    (\mathbf{k}-\bone)^\dagger = \mathbf{k}^\vee - \bone.
\end{align}

\section{Connected sums \& proof of dualities}
\label{ssec:Connected Sums}

As a new tool for proving identities among ($q$-)multiple zeta values, Seki and Yamamoto introduced the concept of so-called connected sums (this notion is independent of connected sums in topology). With connected sums, they have proven, e.g., the duality of MZVs, Hoffman's identity, and the $q$-analogue of Ohno's relation, cf. \cite{Sek} or \cite{SY}.

Using connected sums, we give a new proof of the duality of Schlesinger--Zudilin $q$MZVs, the duality of Bradley--Zhao $q$MZVs and the usual duality of MZVs. It turns out that the connected sum defined below has the power to prove all three statements at once. As a by-product, we also get a proof for the $q$-Ohno relation. The proof is inspired by the one of Seki and Yamamoto (\cite{SY}), where the authors proved $q$-Ohno's relation for non-modified Bradley--Zhao $q$MZVs. We work with modified $q$MZVs, which will be here the reason that we can prove also Schlesinger--Zudilin duality at the same time. 

\begin{definition}[Connected sum]
Let be $r,\, s\geq 0$, $\mathbf{k} = (k_1,\dots,k_r)\in\N_0^r,\ \boldsymbol\ell = (\ell_1,\dots,\ell_s)\in\N_0^s$ and $x$ real with $|x|<1$. Define the connected sum as
\begin{align*}
    Z_q\left(\mathbf{k};\boldsymbol\ell;x\right) := \sum\limits_{\substack{m_1>\dots>m_r>m_{r+1}=0\\ n_1>\dots >n_s>n_{s+1}=0}}
    &\prod\limits_{i=1}^r \frac{q^{m_i k_i}}{(1 - q^{m_i} x)(1-q^{m_i})^{k_i}} 
    \prod\limits_{j=1}^s \frac{q^{n_j \ell_j}}{(1 - q^{n_j} x)(1-q^{n_j})^{\ell_j}}
    \\
    &\times\frac{q^{m_1 n_1}f_q(m_1;x) f_q(n_1;x)}{f_q(m_1 + n_1;x)},
\end{align*}
where $f_q(m;x) := \prod\limits_{h=1}^m (1 - q^h x)$.
\end{definition}

\begin{remark}\
\label{cs-rem}
\begin{enumerate*}
    \item The connected sum $Z_q$ is symmetric in ${\mathbf{k}}$ and ${\boldsymbol\ell}$ by definition.
    \item Notice that the connected sum is well-defined in the sense that it is a series over positive real numbers and hence either a positive real number (if convergent) or $+\infty$ (if not convergent).
    \item  If $k_1\geq 1$, then $Z_q\left({\mathbf{k}};\emptyset;0\right) = \sz\left(\mathbf{k}\right)$.
    \item If $k_1\geq 1$, then $\lim\limits_{x\rightarrow 1} Z_q\left({\mathbf{k}};\emptyset;x\right) = \bz\left({\mathbf{k + 1}}\right)$.
\end{enumerate*}
\end{remark}

\begin{proposition}[Boundary conditions]
If $k_1\geq 1,\, 0<q<1$ and $x\in\R$ with $|x|<1$, then $Z_q\left({\mathbf{k}};\emptyset ; x\right)$ is a well-defined real number.
\end{proposition}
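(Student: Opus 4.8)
The plan is to first unwind the definition in the boundary case $\boldsymbol\ell=\emptyset$. When $s=0$ the summation condition $n_1>\dots>n_s>n_{s+1}=0$ degenerates to $n_1=0$, so that $q^{m_1 n_1}=1$, $f_q(n_1;x)=f_q(0;x)=1$ (empty product), and $f_q(m_1+n_1;x)=f_q(m_1;x)$; hence the entire connector $\frac{q^{m_1 n_1}f_q(m_1;x)f_q(n_1;x)}{f_q(m_1+n_1;x)}$ collapses to $1$. This reduces the claim to showing that
\[
Z_q(\mathbf{k};\emptyset;x)=\sum_{m_1>\dots>m_r>0}\prod_{i=1}^r\frac{q^{m_i k_i}}{(1-q^{m_i}x)(1-q^{m_i})^{k_i}}
\]
is finite. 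By the second item of Remark~\ref{cs-rem} the terms are nonnegative --- indeed positive, since $q^{m_i}x\leq q^{m_i}|x|\leq|x|<1$ gives $1-q^{m_i}x\geq 1-|x|>0$ --- so it suffices to produce a finite upper bound.

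Next I would bound each factor uniformly. For $m\geq 1$ one has $1-q^m\geq 1-q$ and $1-q^m x\geq 1-|x|$, whence
\[
\frac{q^{m k_i}}{(1-q^m x)(1-q^m)^{k_i}}\leq \frac{q^{m k_i}}{(1-|x|)(1-q)^{k_i}}.
\]
The crucial point is the treatment of the numerator $q^{m_i k_i}$. For the outermost index I use the SZ-admissibility hypothesis $k_1\geq 1$ to extract genuine geometric decay, $q^{m_1 k_1}\leq q^{m_1}$, while for the inner indices, where only $k_i\geq 0$ is available, I use the trivial bound $q^{m_i k_i}\leq 1$. Combining these estimates yields
\[
Z_q(\mathbf{k};\emptyset;x)\leq \frac{1}{(1-|x|)^r\,(1-q)^{k_1+\dots+k_r}}\sum_{m_1>\dots>m_r>0}q^{m_1}.
\]

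It then remains to check convergence of the reduced sum $\sum_{m_1>\dots>m_r>0}q^{m_1}$. For fixed $m_1$ the inner variables range over the $\binom{m_1-1}{r-1}$ strictly decreasing tuples $m_1>m_2>\dots>m_r\geq 1$, so the sum equals $\sum_{m_1\geq r}\binom{m_1-1}{r-1}q^{m_1}=\frac{q^r}{(1-q)^r}$ by the negative binomial series, and in particular is finite. This gives an explicit finite bound and completes the argument.

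The step I expect to require the most care is precisely the interplay between the strict ordering of the summation variables and the fact that the inner components $k_i$ may vanish. Dropping the ordering and summing each $m_i$ independently would be fatal: a factor with $k_i=0$ would contribute $\sum_{m\geq 1}\frac{1}{1-q^m x}$, whose terms tend to $1$ and which therefore diverges. Convergence is rescued only because the ordering confines the inner variables to a simplex, so that for each $m_1$ there are merely polynomially many (in $m_1$) admissible inner tuples, and this polynomial factor is absorbed by the geometric decay $q^{m_1}$ coming from $k_1\geq 1$. I would make sure that this is the one place in the write-up where the hypothesis $k_1\geq 1$ is genuinely invoked.
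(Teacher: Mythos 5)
Your proof is correct, and its first move coincides with the paper's: both arguments bound the connector factors $\frac{1}{1-q^{m_i}x}$ uniformly from below in the denominator so as to pull them out of the sum (the paper uses $\frac{1}{(1-qx)^r}$, you use $\frac{1}{(1-|x|)^r}$, which has the minor advantage of being manifestly valid for negative $x$ as well). The difference lies in what happens afterwards: the paper stops once the sum has been reduced to $\sz(k_1,\dots,k_r)$ and simply invokes the known convergence of Schlesinger--Zudilin $q$MZVs at SZ-admissible indices, whereas you push through to a fully self-contained bound, further estimating $(1-q^{m_i})^{-k_i}\leq(1-q)^{-k_i}$, keeping only the geometric decay $q^{m_1k_1}\leq q^{m_1}$ from the outermost entry, and counting the inner tuples on the simplex to evaluate $\sum_{m_1>\dots>m_r>0}q^{m_1}=\frac{q^r}{(1-q)^r}$. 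Your version therefore proves convergence of $\sz(\mathbf{k})$ along the way rather than assuming it, and your closing discussion correctly isolates why the ordering of the summation variables (rather than termwise decay of the factors with $k_i=0$) is what saves convergence --- a point the paper's one-line reduction leaves implicit. The only hypothesis you do not explicitly restate is that an SZ-admissible index has $r\geq 1$ entries, so the sum is nonempty; for $r=0$ the statement is vacuous anyway.
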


\begin{proof}  One has
\begin{align*}
    Z_q\left({\mathbf{k}};\emptyset ; x\right) 
    = &\,
    \sum\limits_{m_1 > \dots > m_{r+1} := 0} \prod\limits_{i=1}^r \frac{q^{m_i k_i}}{(1-q^{m_i}x)(1-q^{m_i})^{k_i}} 
    \\ \leq &\,  
    \frac{1}{(1-q x)^r} \sum\limits_{m_1>\dots > m_{r+1} := 0} \prod\limits_{i=1}^r \frac{q^{m_i k_i}}{(1-q^{m_i})^{k_i}} 
    = \, 
    \frac{1}{(1-q x)^r}\,  \sz(k_1,\dots,k_r),
\end{align*}
which is well-defined since $k_1\geq 1$, i.e., $(k_1,\dots,k_r)$ SZ-admissible. \end{proof} 
After we have checked well-definednes of $Z_q$, we state and prove now distinguished relations among our connected sums.
\begin{theorem}[Transport relations]
\label{thm: transport rel}
Let be $r,s\geq 0$ and $k_1,\dots,k_r,\, \ell_1,\dots,\ell_s\geq 0$. If $s>0$,
\begin{align}
\label{T1}
    Z_q\left((0,k_1,\dots,k_r);(\ell_1,\dots,\ell_s);x\right)
    =
    Z_q\left((k_1,\dots,k_r);(\ell_1+1,\ell_2,\dots,\ell_s);x\right)
\end{align}
and if $r>0$,
\begin{align}
\label{T2}
    Z_q\left((k_1+1,k_2,\dots,k_r);(\ell_1,\dots,\ell_s);x\right)
    =
    Z_q\left((k_1,\dots,k_r);(0,\ell_1,\ell_2,\dots,\ell_s);x\right).
\end{align}
\end{theorem}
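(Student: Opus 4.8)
\emph{The plan is as follows.} First I would observe that the two relations are not independent: by the symmetry of $Z_q$ in its two index arguments (Remark~\ref{cs-rem}), swapping the roles of $\mathbf{k}$ and $\boldsymbol\ell$ turns \eqref{T1} into \eqref{T2}. Indeed, applying $Z_q(\mathbf{a};\mathbf{b};x)=Z_q(\mathbf{b};\mathbf{a};x)$ to both sides of \eqref{T2} and relabelling $(k_1,\dots,k_r)\leftrightarrow(\ell_1,\dots,\ell_s)$ recovers \eqref{T1}, with the hypothesis $r>0$ in \eqref{T2} matching $s>0$ in \eqref{T1}. Hence it suffices to prove \eqref{T1}.

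The strategy for \eqref{T1} is to carry out the single innermost summation over the variable attached to the leading entry $0$ on the left. Since every term of $Z_q$ is positive (Remark~\ref{cs-rem}), I may sum in any order. Writing $m_1>m_2>\cdots$ for the summation variables of the first index $(0,k_1,\dots,k_r)$, the entry $0$ contributes no weight factor, so that the full $m_1$-dependence of the summand is
\[
  \frac{1}{1-q^{m_1}x}\cdot\frac{q^{m_1 n_1}f_q(m_1;x)}{f_q(m_1+n_1;x)}
  = \frac{q^{m_1 n_1}f_q(m_1-1;x)}{f_q(m_1+n_1;x)},
\]
where I used $f_q(m_1;x)=(1-q^{m_1}x)f_q(m_1-1;x)$ and pulled out the $m_1$-independent factor $f_q(n_1;x)$.

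The heart of the argument is a telescoping identity. Setting
\[
  a(m):=\frac{q^{m n_1}f_q(m-1;x)}{(1-q^{n_1})\,f_q(m-1+n_1;x)},
\]
a direct computation should give $a(m)-a(m+1)=\dfrac{q^{m n_1}f_q(m-1;x)}{f_q(m+n_1;x)}$, exactly the summand above. Because $s>0$ forces $n_1\geq 1$, one has $a(m)\to 0$ as $m\to\infty$, so the sum over $m_1>m_2$ telescopes to
\[
  \sum_{m_1>m_2}\frac{q^{m_1 n_1}f_q(m_1-1;x)}{f_q(m_1+n_1;x)}
  =a(m_2+1)=\frac{q^{(m_2+1)n_1}f_q(m_2;x)}{(1-q^{n_1})\,f_q(m_2+n_1;x)}.
\]

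Finally I would reassemble and match. Restoring $f_q(n_1;x)$, the evaluated sum equals $\frac{q^{n_1}}{1-q^{n_1}}$ times the connecting factor $\frac{q^{m_2 n_1}f_q(m_2;x)f_q(n_1;x)}{f_q(m_2+n_1;x)}$, in which $m_2$ now plays the role of the outermost $m$-variable. The residual factor $\frac{q^{n_1}}{1-q^{n_1}}$ combines with the original weight $\frac{q^{n_1\ell_1}}{(1-q^{n_1}x)(1-q^{n_1})^{\ell_1}}$ of $n_1$ to produce $\frac{q^{n_1(\ell_1+1)}}{(1-q^{n_1}x)(1-q^{n_1})^{\ell_1+1}}$, precisely the effect of raising $\ell_1$ to $\ell_1+1$. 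After the harmless relabelling $m_{i+1}\mapsto m_i'$ of the remaining $m$-variables, the surviving sum is term-by-term the defining sum of $Z_q((k_1,\dots,k_r);(\ell_1+1,\ell_2,\dots,\ell_s);x)$, which is \eqref{T1}. I expect the only genuine obstacle to be discovering the telescoping function $a(m)$ and confirming that its boundary value at infinity vanishes; once $a(m)$ is in hand, the remaining index bookkeeping (including the degenerate case $r=0$, where $m_2=0$) is routine.
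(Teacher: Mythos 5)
Your proposal is correct and follows essentially the same route as the paper: reduce \eqref{T2} to \eqref{T1} by the symmetry of $Z_q$, then evaluate the sum over the variable attached to the leading $0$ via a telescoping identity whose boundary term vanishes because $s>0$ forces $n_1\geq 1$. Your telescoping function $a(m)$ is, up to the factor $f_q(n_1;x)$ and a shift of argument, exactly the function $\frac{q^{n}}{1-q^{n}}\cdot\frac{q^{an}f_q(a;x)f_q(n;x)}{f_q(a+n;x)}$ that the paper telescopes, and your bookkeeping of how the residual $\frac{q^{n_1}}{1-q^{n_1}}$ raises $\ell_1$ to $\ell_1+1$ matches the paper's conclusion.
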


\begin{proof}  The second equality follows from the first by symmetry and the first one is obtained from
\begin{align*}
    &\sum\limits_{a>m} \frac{1}{1-q^a x} \frac{q^{a n} f_q(a;x) f_q(n;x)}{f_q(a+n;x)}
    \\
    =\, &\, \frac{q^n}{1-q^n}\sum\limits_{a>m} \left(\frac{q^{(a-1) n} f_q(a-1;x) f_q(n;x)}{f_q(a+n-1;x)} - \frac{q^{a n} f_q(a;x) f_q(n;x)}{f_q(a+n;x)} \right)
    \\
    =\, &\, \frac{q^n}{1-q^n} \frac{q^{m n} f_q(m;x) f_q(n;x)}{f_q(m+n;x)}
\end{align*}
and setting $m=m_1,\, n=n_1,\, a = m_0$. \end{proof} 
This theorem is the key of proving Theorems \ref{MZV-dual}--\ref{SZ-dual}. Especially, the following corollary will be needed, together with the connection of $Z_q$ with $\bz$ resp. $\sz$ (Remark \ref{cs-rem}).
\begin{corollary}
\label{cs-dagger}
For every SZ-admissible index ${\mathbf{k}}$ and real $x$ with $|x|<1$ we have
\begin{align*}
    Z_q\left({\mathbf{k}};\emptyset ; x\right) = Z_q\left(\emptyset;{\mathbf{k}}^\dagger ;x\right).
\end{align*}
\end{corollary}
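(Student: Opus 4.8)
The plan is to transport, one block at a time, the whole index $\mathbf{k}$ out of the first slot of $Z_q$ into the (initially empty) second slot, using only the transport relations \eqref{T1} and \eqref{T2} of Theorem~\ref{thm: transport rel}. Write $\mathbf{k}$ in its block shape $\mathbf{k} = (k_1+1,\{0\}^{d_1},\dots,k_r+1,\{0\}^{d_r})$ with $k_j,d_j\geq 0$. I expect each block $(k_j+1,\{0\}^{d_j})$ to be transported as a unit, leaving behind the dual block $(d_j+1,\{0\}^{k_j})$ at the \emph{front} of the second slot; since the blocks are deposited in this prepending fashion, they accumulate in reversed order and assemble exactly into $\mathbf{k}^\dagger$.

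First I would isolate the elementary move as a lemma: for all $k,d\geq 0$, every left-index $\mathbf{m}$, and every right-index $\boldsymbol\ell$,
\begin{align*}
    Z_q\left((k+1,\{0\}^d,\mathbf{m});\boldsymbol\ell;x\right) = Z_q\left(\mathbf{m};(d+1,\{0\}^k,\boldsymbol\ell);x\right).
\end{align*}
To prove it I would apply \eqref{T2} exactly $k+1$ times: this decrements the leading entry from $k+1$ down to $0$ while prepending $k+1$ zeros to $\boldsymbol\ell$, turning the left side into $(\{0\}^{d+1},\mathbf{m})$ and the right side into $(\{0\}^{k+1},\boldsymbol\ell)$. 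Then I would apply \eqref{T1} exactly $d+1$ times: each application strips one leading $0$ from the left and increments the first entry of the right, so the block $\{0\}^{d+1}$ is consumed and the leading $0$ on the right grows to $d+1$, producing the claimed right-hand side.

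With this lemma in hand the corollary follows by iterating it $r$ times (formally, by induction on the number of blocks $r$, the base case $r=0$ being the tautology $Z_q(\emptyset;\emptyset;x)=Z_q(\emptyset;\emptyset;x)$). Starting from $Z_q(\mathbf{k};\emptyset;x)$ with $\mathbf{k}=(k_1+1,\{0\}^{d_1},\mathbf{m})$, one application of the lemma gives $Z_q(\mathbf{m};(d_1+1,\{0\}^{k_1});x)$; feeding the remaining blocks of $\mathbf{m}$ into the lemma one after another, and using that each new dual block is prepended, leaves after the last block the right-hand index $(d_r+1,\{0\}^{k_r},\dots,d_1+1,\{0\}^{k_1}) = \mathbf{k}^\dagger$, which is precisely $Z_q(\emptyset;\mathbf{k}^\dagger;x)$.

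The main point of care is the side conditions on the transport relations: \eqref{T2} needs the left index nonempty and \eqref{T1} needs the right index nonempty. I would check that throughout the lemma's $\eqref{T2}$-phase the left index keeps positive length (the relation only decrements, never deletes, the leading entry, so its length is unchanged), and that throughout the $\eqref{T1}$-phase the right index has positive length (the preceding phase already prepended $k+1\geq 1$ zeros). In particular the very first move of the whole argument must be a \eqref{T2}, which is legitimate because SZ-admissibility forces the first entry of $\mathbf{k}$ to be positive. This bookkeeping, together with correctly tracking the reversal of block order, is the only delicate part; all the algebraic content is already supplied by Theorem~\ref{thm: transport rel}.
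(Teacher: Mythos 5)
Your proposal is correct and is essentially the paper's own proof: the paper isolates the identical one-block transport lemma (obtained by applying \eqref{T2} to exhaust the leading positive entry and then \eqref{T1} to consume the trailing zeros, depositing the dual block at the front of the second slot) and then inducts on the number of blocks $r$. The only difference is cosmetic indexing (the paper writes a block as $(k,\{0\}^d)$ with $k\geq 1$ where you write $(k+1,\{0\}^d)$ with $k\geq 0$), and your bookkeeping of the side conditions of Theorem~\ref{thm: transport rel} is accurate.
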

\begin{proof}  For all indices ${\mathbf{k}}$ and ${\boldsymbol\ell}$ and $k\geq 1,\, d\geq 0$ we obtain (by $\left(k,\{0\}^{d},{\mathbf{k}}\right)$ we mean the concatination of the indices $\left(k,\{0\}^d\right)$ and ${\mathbf{k}}$) by applying $k$-times \eqref{T2} first and then $(d+1)$-times \eqref{T1}
\begin{align}
\label{cs-szdual-eq}
    Z_q\left(\left(k,\{0\}^{d},{\mathbf{k}}\right);{\boldsymbol\ell} ; x\right) = Z_q\left(\left(\{0\}^{d+1},{\mathbf{k}}\right);\left(\{0\}^{k},{\boldsymbol\ell}\right);x\right)
    = Z_q\left({\mathbf{k}},\left(d+1,\{0\}^{k-1};{\boldsymbol\ell}\right);x\right).
\end{align}
Now, set ${\boldsymbol\ell} = \emptyset$ and write an SZ-admissible index ${\mathbf{k}}$ in the form
\begin{align*}
    {\mathbf{k}} = \left(k_1,\{0\}^{d_1},\dots,k_r,\{0\}^{d_r}\right).
\end{align*}
Then we obtain the corollary by induction on $r$ and using \eqref{cs-szdual-eq} in the induction step.\end{proof} 

%\begin{corollary}
%\label{SZ-duality-cs}
%SZ-duality (Thm. \ref{SZ-dual}) follows.
%\end{corollary}
With the connection of $Z_q$ and $\sz$ (Rem. \ref{cs-rem} (iii)), SZ-duality follows directly:
\begin{proof}[Proof of Theorem \ref{SZ-dual}]  Take some SZ-admissible index ${\mathbf{k}}$. Using the symmetry of $Z_q$ and setting $x=0$, the claim follows by Corollary \ref{cs-dagger}:
\begin{align*}
    \sz\left({\mathbf{k}}\right) = Z_q\left({\mathbf{k}};\emptyset ;0\right) = Z_q\left(\emptyset;{\mathbf{k}}^\dagger ;0\right) = Z_q\left({\mathbf{k}}^\dagger ;\emptyset ;0\right)
    = \sz\left({\mathbf{k}}^\dagger\right).\tag*{\qedhere}
\end{align*}
\end{proof} 
Analogously, we are able to prove BZ-duality:
%\begin{corollary}
%\label{cs BZ duality}
%BZ-duality (Thm. \ref{bz duality}) follows from Theorem \ref{thm: transport rel}.
%\end{corollary}
\begin{proof}[Proof of Theorem \ref{BZ-dual}]  For an admissible index ${\mathbf{k}}$ we have, using Remark \ref{cs-rem} and Corollary \ref{cs-dagger},
\begin{align*}
    \bz({\mathbf{k}}) &= \lim\limits_{x\rightarrow 1} Z_q\left({\mathbf{k}} - {\mathbf{1}};\emptyset ;x\right) = \lim\limits_{x\rightarrow 1} Z_q\left(\emptyset;({\mathbf{k}} - {\mathbf{1}})^\dagger ;x\right)
    \\
    &= \lim\limits_{x\rightarrow 1} Z_q\left(({\mathbf{k}} - {\mathbf{1}})^\dagger ;\emptyset ;x\right)
    = \bz\left(({\mathbf{k}} - {\mathbf{1}})^\dagger + {\mathbf{1}}\right) 
    = \bz\left({\mathbf{k}}^\vee\right).\tag*{\qedhere}
\end{align*}
\end{proof} 

\begin{example} We give a concrete example of applying transport relations step by step to make clear what happens:
\begin{align*}
        &\, Z_q\left((1,0);\emptyset ;x\right) = Z_q\left((0,0);(0);x\right)
        \\
        =\, &\, Z_q\left((0);(1);x\right) = Z_q\left(\emptyset ; (2);x\right)
\end{align*}
By Remark \ref{cs-rem} (iii) resp. (iv), we obtain $\zeta_q^{SZ}(1,0) = \zeta_q^{SZ}(2)$ resp. $\zeta_q^{BZ}(2,1) = \zeta_q^{BZ}(3)$. We have $(1,0)^\dagger = (2)$ and $(2,1)^\vee = (3)$, why these results indeed correspond to SZ- resp. BZ-duality.
%\begin{align*}
%        &\, Z_q\left((2,1);\emptyset ; x) = Z_q((1,1);(0);x\right)
%        =\, Z_q\left((0,1);(0,0);x\right)
%        \\
%        =\, &\, Z_q\left((1);(1,0);x\right) = Z_q\left((0);(0,1,0);x\right) = Z_q\left(\emptyset ; (1,1,0);x\right)
%        \\
%        \Rightarrow &\, \zeta_q^{SZ}(2,1) = \zeta_q^{SZ}(1,1,0),\ \zeta_q^{BZ}(3,2) = \zeta_q^{BZ}(2,2,1).
%    \end{align*}
%This is exactly SZ-duality respective BZ-duality in the mentioned example since $(2,1)^\dagger = (1,1,0)$ and $(3,2)^\vee = (2,2,1)$.
\end{example}

%\begin{corollary}
%\label{mzv dual consum}
%MZVs satisfy the usual duality, Theorem \ref{mzvs duality}, i.e. $\zeta({\mathbf{k}}) = \zeta({\mathbf{k}}^\vee)$ for every admissible index ${\mathbf{k}}$.
%\end{corollary}
We derive in the following the proof of MZV-duality, Theorem \ref{MZV-dual}, from BZ-duality:
\begin{proof}[Proof of Theorem \ref{MZV-dual}] Let ${\mathbf{k}}$ be any admissible index. Denote by $\wt\left(\mathbf{k}\right) := k_1+\dots+k_r$ the sum of all entries, the \emph{weight} of $\mathbf{k}$. Obviously, one has $\wt\left(\mathbf{k}\right) = \wt\left(\mathbf{k}^\vee\right)$. We have
\begin{align*}
    \zeta\left({\mathbf{k}}\right) = \lim\limits_{q\rightarrow 1} (1-q)^{\wt\left(\mathbf{k}\right)}\bz\left({\mathbf{k}}\right)
    =\lim\limits_{q\rightarrow 1} (1-q)^{\wt\left(\mathbf{k}^\vee\right)} \bz\left({\mathbf{k}}^\vee\right) = \zeta\left({\mathbf{k}}^\vee\right).\tag*{\qedhere}
\end{align*}
\end{proof} 

%\begin{corollary}
%\label{qOhno rel}
%The $q$-Ohno relation follows from Theorem \ref{thm: transport rel}, i.e. for any admissible index ${\mathbf{k}} = (k_1,\dots,k_r)$ and any $c\in\N_0$ we have
%\begin{align*}
%    \sum\limits_{ |{\mathbf{c}}| = c} \bz({\mathbf{k}} + {\mathbf{c}}) = \sum\limits_{ |{\mathbf{c}}| = c} \bz({\mathbf{k}}^\vee + {\mathbf{c}}),
%\end{align*}
%where we sum over all ${\mathbf{c}} = (c_1,\dots,c_r)\in\N_0^r$ with $|{\mathbf{c}}| := c_1+\dots + c_r = c$.
%\end{corollary}

Consider for the proof of Theorem \ref{q-Ohno} connected sums of the form $Z_q\left({\mathbf{k}};\emptyset ; x\right)$ and the related one of the form $Z_q\left(\emptyset;{\boldsymbol\ell}; x\right)$ using transport relations. In both, we will develop all occurring terms as a Taylor series at $x = 1$, mainly we use that for all $m\in\N$, we have
\begin{align*}
    \frac{1}{1-q^m x} =\,  \frac{1}{1-q^m}\frac{1}{1-\frac{q^m}{1-q^m}(x-1)}
    = \frac{1}{1-q^m}\sum\limits_{c\geq 0} \left(\frac{q^m}{1-q^m}\right)^c (x-1)^c
    =\, \sum\limits_{c\geq 0} \frac{q^{m c}}{(1-q^m)^{c+1}} (x-1)^c.
\end{align*}
\begin{proof}[Proof of Theorem \ref{q-Ohno}]
Let ${\mathbf{k}} = (k_1,\dots,k_r)$ be an admissible index. Then we have
\begin{align*}
    Z_q\left({\mathbf{k}} - {\mathbf{1}};\emptyset ; x\right) =\, & \sum\limits_{m_1>\dots > m_r > 0} \prod\limits_{j=1}^r \frac{1}{1-q^{m_j}x} \frac{q^{m_j (k_j - 1)}}{(1-q^{m_j})^{k_j - 1}}
    \\
    =\, & \sum\limits_{m_1>\dots > m_r > 0} \prod\limits_{j=1}^r \left( \sum\limits_{c_j\geq 0} \frac{q^{m_j c_j}}{(1-q^{m_j})^{c_j+1}} (x-1)^{c_j}\frac{q^{m_j (k_j - 1)}}{(1-q^{m_j})^{k_j - 1}}\right)
    \\
    =\, & \sum\limits_{c_1,\dots,c_r\geq 0} \sum\limits_{m_1>\dots > m_r > 0} \left(\prod\limits_{j=1}^r \frac{q^{m_j (k_j + c_j -1)}}{(1-q^{m_j})^{k_j + c_j}} \right) (x-1)^{c_1 + \dots + c_r}
    \\
    =\, & \sum\limits_{c_1,\dots,c_r\geq 0} \bz({\mathbf{k}} + {\mathbf{c}}) (x-1)^{|{\mathbf{c}}|}.
\end{align*}
Since $\mathbf{k}$ was an arbitrary admissible index and $\mathbf{k}^\vee$ is admissible too, we get
\begin{align*}
    Z_q\left(\emptyset;{\mathbf{k}}^\vee - {\mathbf{1}} ; x\right) = \sum\limits_{c_1,\dots,c_r\geq 0} \bz\left({\mathbf{k}}^\vee + {\mathbf{c}}\right) (x-1)^{|{\mathbf{c}}|}.
\end{align*}

Now, since $Z_q\left({\mathbf{k}} - {\mathbf{1}}; \emptyset ; x\right) = Z_q\left(\emptyset; {\mathbf{k}}^\vee - {\mathbf{1}}; x\right)$ for every admissible index by using the transport relations, the result follows by comparing the coefficient of $(x-1)^c$ on both sides.\end{proof}  

In the same way, we can consider $Z_q\left({\mathbf{k}} - {\mathbf{1}}; \emptyset ; x\right)$ when developing $\frac{1}{1-q^m x}$ around some $a\in\R$, i.e.,
\begin{align*}
    \frac{1}{1-q^m x} = \frac{1}{1-a q^m -q^m (x-a)} = \frac{1}{1-a q^m} \frac{1}{1-\frac{q^m}{1-a q^m} (x-a)} = \sum\limits_{c\geq 0} \frac{q^{m c}}{(1-a q^m)^{c+1}}(x-a)^c.
\end{align*}
Then it is
\begin{align*}
    Z_q\left({\mathbf{k}};\emptyset ; x\right) =\, & \sum\limits_{m_1>\dots > m_r > 0} \prod\limits_{j=1}^r \frac{1}{1-q^{m_j}x} \frac{q^{m_j k_j}}{(1-q^{m_j})^{k_j}}
    \\
    =\, & \sum\limits_{m_1>\dots > m_r > 0} \prod\limits_{j=1}^r  \sum\limits_{c_j\geq 0} \frac{q^{m_j c_j}}{(1-a q^{m_j})^{c_j + 1}} \frac{q^{m_j k_j}}{(1-q^{m_j})^{k_j}} (x-a)^{c_j}.
\end{align*}

\begin{remark}
The series
\begin{align*}
    \sum\limits_{m_1>\dots > m_r > 0} \prod\limits_{j=1}^r  \frac{q^{m_j c_j}}{(1-a q^{m_j})^{c_j + 1}} \frac{q^{m_j k_j}}{(1-q^{m_j})^{k_j}}
\end{align*}
for $c_1,\dots,c_r\geq 0$, $k_1\geq 2,\, k_2,\dots,k_r\geq 1$ and $a\in [0,1]$ can be seen as $q$-analogue of MZVs: For $a=1$ we have seen already by proving the $q$-Ohno relation, how this works. For arbitrary $a$, it is not clear so far, whether we can prove more identities among $q$MZVs with this shape of the connected sum. This could be interesting for the future.
%more difficult to see whether and why this series is an $q$MZV. In particular, for $a=0$, we see that the series is not an element of $\mathcal{Z}_q$.
\end{remark}

\section*{Acknowledgements}
I would like to thank to thank Kathrin Bringmann for her helpful comments on the paper. Furthermore, I thank Henrik Bachmann and Ulf K\"uhn for fruitful discussions and lots of comments while supervising my master thesis, of which this paper is part of.

\def\bibindent{1em}

\end{document}